\documentclass[a4paper, 12pt]{amsart}

\usepackage{amsmath}
\usepackage{amsfonts}
\usepackage{amssymb}
\usepackage[utf8]{inputenc}

\newtheorem{thrm}{Theorem}[section]
\newtheorem{col}[thrm]{Corollary}
\newtheorem{lemma}[thrm]{Lemma}
\newtheorem{fact}[thrm]{Proposition}

\newtheorem{question}[thrm]{Question}



\theoremstyle{definition}

\DeclareMathOperator{\supp}{supp}

\newcommand{\NN}{\mathbb{N}}

\frenchspacing

\textwidth=16.5cm
\textheight=23cm
\parindent=16pt
\oddsidemargin=-0.5cm
\evensidemargin=-0.5cm
\topmargin=-0.5cm

\bibliographystyle{plain}

\title[A metrizable $X$ with $C_p(X)$ not homeomorphic to $C_p(X)\times C_p(X)$]{A metrizable $X$ with $C_p(X)$ not homeomorphic to $C_p(X)\times C_p(X)$}
\author{Miko\l aj Krupski}
\author{Witold Marciszewski}

\address{Institute of Mathematics\\ University of Warsaw\\ \newline Ul. Banacha 2\\02--097 Warszawa\\ Poland }
\email{mkrupski@mimuw.edu.pl}
\email{wmarcisz@mimuw.edu.pl}

\subjclass[2010]{46E10, 54C35}

\keywords{Function space; pointwise convergence topology; $C_p(X)$ space; Bernstein set}

\date{\today}
\thanks{The first author was partially supported by the Polish National Science Center research grant UMO-2012/07/N/ST1/03525. The second author was partially supported by the Polish National Science Center research grant DEC-2012/07/B/ST1/03363}

\begin{document}

\begin{abstract}
We give an example of an infinite metrizable space $X$ such that the space $C_p(X)$, of continuous real-valued function on $X$ endowed
with the pointwise topology, is not homeomorphic to its own square $C_p(X)\times C_p(X)$.
The space $X$ is a zero-dimensional subspace of the real line.
Our result answers a long-standing open question in the theory
of function spaces posed by A.V. Arhangel'skii.
\end{abstract}

\maketitle

\section{Introduction}

Let $C_p(X)$ denote the space of all continuous real-valued functions on a Tychonoff space $X$, equipped with the topology of pointwise convergence.
One of the important questions, stimulating the theory of $C_p$--spaces for almost 30 years and leading to interesting examples in this theory,
is the problem whether the space $C_p(X)$ is (linearly) homeomorphic to its own square $C_p(X)\times C_p(X)$, provided $X$ is an infinite compact or
metrizable space, cf. A.V.\ Arhangel’skii's articles \cite[Problem 22]{A1}, \cite[Problem 4]{A2}, \cite[Problem 25]{A3}.
In this note we give a metrizable counterexample to this problem for homeomorphisms.
 
The first nonmetrizable (compact)
counterexamples, i.e. spaces $X$ with $C_p(X)$ not homeomorphic to $C_p(X)\times C_p(X)$, were constructed independently by Gul'ko \cite{G} and Marciszewski \cite{M}. However, the metrizable case seemed to be
more delicate. In \cite{P} R.\ Pol showed that if $M$ is a Cook continuum, then $C_p(M)$ is not linearly homeomorphic to $C_p(M)\times C_p(M)$ (in Section 5 we will show that in fact there is no linear continuous surjection from $C_p(M)$ onto $C_p(M)\times C_p(M)$). He
also gave two other examples of metrizable spaces having the same property: a rigid Bernstein set $B$ and the A.H.\ Stone's set $E$.
This result, settled one part of \cite[Problem 4]{A2} and \cite[Problem 25]{A3} yet the question whether, for a metrizable (compact) space $X$, the space
$C_p(X)$ is always homeomorphic to $C_p(X)\times C_p(X)$ remained open (see \cite[Problem 4.12]{M1}, \cite[Problem 1029]{Pearl}).
It was proved in \cite{vMPP} that if $M$ is a Cook continuum then $C_p(M)$ is not uniformly homeomorphic to $C_p(M)\times C_p(M)$. It is not clear  whether
the notion of uniform homeomorphism in this result can be replaced by a weaker notion of homeomorphism (see \cite[page 656]{vMPP}).

We show that the rigid Bernstein set $B$, considered by R.\ Pol in the context of linear homeomorphisms, can serve as a counterexample solving the problem of Arhangel'skii for homeomorphisms. We shall prove the following:

\begin{thrm}\label{Tw.1}
There exists an infinite zero-dimensional subspace $B$ of the real line (a rigid Bernstein set), such that the function space $C_p(B)$ is not homeomorphic to $C_p(B)\times C_p(B)$.
\end{thrm}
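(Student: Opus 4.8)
The plan is to identify $C_p(B)\times C_p(B)$ with $C_p(B\oplus B)$, where $B\oplus B$ is the topological sum of two disjoint copies $B_0,B_1$ of $B$, and then to reduce the nonexistence of a homeomorphism $\Phi\colon C_p(B)\to C_p(B\oplus B)$ to a purely topological incompatibility between $B$ and $B\oplus B$. The property I would build into $B$ is strong rigidity: every homeomorphism between two subspaces of $B$ is a restriction of the identity of $\mathbb R$. A Bernstein set with this property is produced by a transfinite recursion of length $\mathfrak c$; since a homeomorphism between separable metric subsets of $\mathbb R$ is determined by its values on a countable dense set, there are only $\mathfrak c$ ``threats'' to diagonalize against, while placing $B$ inside the irrationals (and meeting every perfect set of irrationals in both $B$ and its complement) guarantees both the Bernstein condition and $\ind B=0$. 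The first, clean consequence is that $B\oplus B$ is \emph{not} homeomorphic to $B$: the swap of $B_0$ and $B_1$ is a fixed-point-free involution of $B\oplus B$, whereas rigidity forces the identity to be the only autohomeomorphism of $B$; so a homeomorphism $B\cong B\oplus B$ would transport the involution to a fixed-point-free autohomeomorphism of $B$, which is impossible.

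Thus it suffices to show that a homeomorphism $\Phi\colon C_p(B)\to C_p(B\oplus B)$ would manufacture a homeomorphism between $B$ and $B\oplus B$. To this end I would develop the support calculus for (not necessarily linear) continuous maps. For each $z\in B\oplus B$ the functional $f\mapsto\Phi(f)(z)$ on $C_p(B)$ is continuous, and I would attach to it a countable support $\supp(z)\subseteq B$, a minimal countable set on which $\Phi(f)(z)$ depends; symmetrically $\Phi^{-1}$ yields supports $\supp^{*}(b)\subseteq B\oplus B$ for $b\in B$. These two set-valued maps are mutually dual, and the central step is to prove that, off a first-category set, each of them is single-valued and continuous, so that $\supp$ restricts to a genuine partial homeomorphism $\sigma$ from a comeager subset of $B\oplus B$ into $B$. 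Here the zero-dimensionality of $B$ is decisive: it is what lets one read off clopen ``coordinates'' via $\sgn$-type functions and pin the support down to a single point. This single-valuedness is the part I expect to absorb the bulk of the work and to be the genuine obstacle, since for nonlinear maps the support need not collapse and the category argument must be set up in the right ambient Polish framework.

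Granting single-valuedness, rigidity closes the argument. Restricting $\sigma$ to the comeager parts of $B_0$ and of $B_1$ and using the identifications $B_0\cong B_1\cong B$, I obtain two partial homeomorphisms between subspaces of $B$; by strong rigidity each is a restriction of the identity, so $\sigma$ sends both the $B_0$-copy and the $B_1$-copy of a point $b$ to $b$ itself, making $\sigma$ essentially two-to-one. On the other hand, by duality the value $\supp^{*}(b)$ contains every $z$ with $\sigma(z)=b$, so for comeagerly many $b$ it contains both copies of $b$ and is therefore at least two-valued. This contradicts the single-valuedness of $\supp^{*}$ obtained in the previous step, and the contradiction shows that no homeomorphism $\Phi$ can exist, proving the theorem.
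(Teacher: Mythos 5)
Your reduction collapses exactly at the step you flag as the crux: no ``single-valuedness off a meager set'' lemma can be true by the mechanism you propose, because that mechanism (zero-dimensionality plus a Baire category argument, reading off clopen coordinates with $\sgn$-type functions) makes no use of the rigidity of $B$, and without rigidity the statement is simply false. Concretely, let $Z=2^\omega$ and $X=Y=\{0,1\}\times Z$ (compact, zero-dimensional, Baire), and define $T:C_p(X)\to C_p(Y)$ by $T(f)(i,z)=f(0,z)+(-1)^i f(1,z)$. This is a linear homeomorphism, and the functional $f\mapsto T(f)(i,z)$ genuinely depends on both $f(0,z)$ and $f(1,z)$, so the minimal support of \emph{every} point is two-valued; no meager set can be discarded to make it single-valued. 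Worse, even for the rigid $B$ the natural support structure of a hypothetical $\Phi:C_p(B)\to C_p(B\oplus B)$ is two-valued: rigidity only forces the support of $x\in B$ under $\Phi^{-1}$ to lie in $\{x_1,x_2\}\cup J'$ for a fixed finite set $J'$ (\emph{both} copies of $x$, plus constants), and this structure is perfectly self-consistent, so it yields no contradiction by itself; your plan of producing an injective partial homeomorphism and contradicting ``two-to-one-ness'' therefore cannot be completed. The ``duality'' $\supp^{*}(b)\supseteq\{z:\sigma(z)=b\}$ you invoke is likewise unavailable for nonlinear maps. (A smaller but real issue: ``every homeomorphism between two subspaces of $B$ is a restriction of the identity'' is literally impossible --- any two singletons, or any two countable discrete subspaces, are homeomorphic; the rigidity one can actually build, and the paper builds, concerns continuous maps on uncountable $G_\delta$ subsets of $B$. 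That weaker form does suffice for your involution argument that $B\not\cong B\oplus B$, which is fine as far as it goes.)

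For comparison, what is actually available is much weaker than your lemma, and this dictates the shape of the paper's proof: by Marciszewski's theorem on $t$-equivalence (Theorem \ref{podzialy}), $Y$ decomposes into countably many $G_\delta$ sets, on each of which there are finitely many continuous ``support'' maps into $X$ related to $\Psi$ only by a quantitative inclusion $\Psi(O_X(A;\tfrac{1}{m}))\subseteq \overline{O}_Y(y;\tfrac{1}{n})$, not by exact dependence. After the rigidity of $B$ (Corollary \ref{sztywnosc}) collapses those maps to the identity or constants on an uncountable $G_\delta$ set, the paper still needs two further ingredients with no counterpart in your outline: the homotopy-theoretic surjectivity lemma (Lemma \ref{lemat Witka}, proved via non-nullhomotopic maps of a sphere in $\mathbb{R}^A$), and a planar connectedness argument (the parameter set $Z$, a plane with a parallelogram removed, is connected, yet its image under $\varphi$ would avoid $(-\tfrac{1}{m},\tfrac{1}{m})$ while containing values greater than $2$ and less than $-1$). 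In short, reducing the theorem to ``$B\not\cong B\oplus B$'' is a dead end: homeomorphism of $C_p$-spaces is far too weak to recover the underlying space even modulo meager sets, as the Dobrowolski--Gul'ko--Mogilski theorem (all countable nondiscrete metrizable spaces have homeomorphic $C_p$'s) already shows, and the two-valued-support phenomenon above persists for Baire spaces.
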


Our proof is based on Theorem \ref{podzialy} below, which is an easy consequence of the main result of \cite{K} proved by the second author. Another important ingredient is Lemma \ref{lemat Witka}
proved in the next section, which may also be of independent
interest. 

The paper is organized as follows. Section 2  introduces basic notation and contains some auxiliary results. In Section 3 we describe the construction of the rigid Bernstein set $B$ and we prove some of its basic properties. Section 4 is devoted to the proof of Theorem \ref{Tw.1}. Finally, Section 5 contains some additional comments and open questions.
\section{Preliminaries}

Let us denote by $\NN$ the set of all positive integers, by $\mathbb{R}$ the set of reals, and by $2^\omega$ the Cantor set. For Tychonoff spaces $X$ and $Y$, by $C_p(X,Y)$ we denote the space of all continuous maps from $X$ into $Y$, endowed with the pointwise convergence topology. For $Y=\mathbb{R}$ we will write $C_p(X)$ rather than $C_p(X,\mathbb{R})$.

For a finite subset $A$ of a space $X$ and for $m\in\NN$ the set
$$O_X(A;\tfrac{1}{m})=\{f\in C_p(X):\forall x\in A\;\;\lvert f(x) \rvert<\tfrac{1}{m}\}$$
is a basic neighborhood of the zero function on $X$ (i.e the constant function equal to zero) in $C_p(X)$
and $\overline{O}_X(A;\tfrac{1}{m})$ is
its closure, i.e. $$\overline{O}_X(A;\tfrac{1}{m})=\{f\in C_p(X):\forall x\in A\;\;\lvert f(x) \rvert\leq\tfrac{1}{m}\}.$$
For a singleton $A=\{x\}$, we will write $\overline{O}_X(x;\tfrac{1}{m})$ rather than $\overline{O}_X(\{x\};\tfrac{1}{m})$.

The following fact is a consequence of results proved by the second author, cf. \cite[proof of Theorem 3.1]{K}.

\begin{thrm}\label{podzialy}
Suppose that $X$ and $Y$ are metrizable spaces.
Let $n\in \NN$ \footnote{In \cite{K} the proof was given for $n=1$ only, but without any changes it works also for arbitrary $n\in \NN$.} and 
suppose that $\Psi:C_p(X)\to C_p(Y)$ is a homeomorphism taking the zero function to the zero function.
Then the space $Y$ can be written as countable union $Y=\bigcup_{r\in\NN}G_r$ of $G_\delta$-subsets such that:
\begin{enumerate}
 \item[(A)] For every $r\in\NN$ there are continuous mappings $f^r_1, \ldots, f^r_{p_r}:G_r\rightarrow X$ and $m\in \NN$ such that
 $\Psi(O_X(A;\tfrac{1}{m}))\subseteq \overline{O}_Y(y;\tfrac{1}{n})$, where $A= \{f^r_1(y),\ldots,f^r_{p_r}(y)\}$.
\end{enumerate}
\end{thrm}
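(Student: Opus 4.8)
The plan is to deduce the statement from the main result of \cite{K}, recalling the mechanism behind it. The starting observation is that the pointwise version of (A) is automatic: for each $y\in Y$ the evaluation functional $\phi_y\colon C_p(X)\to\mathbb{R}$ given by $\phi_y(g)=\Psi(g)(y)$ is continuous, and since $\Psi(0)=0$ we have $\phi_y(0)=0$. Continuity of $\phi_y$ at the zero function, read off against the basic neighborhoods of $0$, therefore produces a finite set $A_y\subseteq X$ and an integer $m_y\in\NN$ with $\Psi(O_X(A_y;\tfrac{1}{m_y}))\subseteq\overline{O}_Y(y;\tfrac{1}{n})$. Thus the entire difficulty lies not in finding such $A_y$ and $m_y$, but in organizing them so that, over pieces of a countable $G_\delta$ decomposition of $Y$, the set $A_y$ is listed by finitely many \emph{continuous} functions of $y$ and the integer $m$ is constant.

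For this organization I would proceed as in \cite{K}. Fix a $\sigma$-discrete base $\mathcal{B}$ of the metrizable space $X$. For $p,m\in\NN$ let $Y_{p,m}$ be the set of $y$ admitting some $A\subseteq X$ with $|A|\le p$ and $\Psi(O_X(A;\tfrac{1}{m}))\subseteq\overline{O}_Y(y;\tfrac{1}{n})$; by the first paragraph $Y=\bigcup_{p,m}Y_{p,m}$. The key technical point, which is exactly what \cite{K} supplies, is that each membership condition can be tested through the separately continuous evaluation $E(g,y)=\Psi(g)(y)$ together with the base $\mathcal{B}$, so that $Y_{p,m}$, as well as its refinement on which the minimal such $A$ has exactly $p$ points, are $G_\delta$ subsets of $Y$. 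Taking these refined pieces as the members of the family gives the decomposition $Y=\bigcup_r G_r$ together with the integers $p_r$ and the witnessing $m$ appearing in (A).

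It remains to produce, on each piece $G_r$, continuous selections $f^r_1,\dots,f^r_{p_r}\colon G_r\to X$ whose values enumerate the support $A$ of $y$; this I expect to be the main obstacle. Here metrizability of $X$ is used decisively: by tracking, through a further $G_\delta$ refinement, which elements of a fine part of $\mathcal{B}$ contain the support points, one pins each support point of $y$ down to a single small base element, whence the $i$-th point depends continuously on $y$ (alternatively one invokes a Michael-type continuous selection). Finally, to pass from $n=1$ to arbitrary $n$ one only observes that throughout the argument the tolerance $1/n$ enters solely as a fixed positive constant describing a fixed closed neighborhood $\overline{O}_Y(y;\tfrac{1}{n})$ of the zero function; its value is never used. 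Hence the proof written in \cite{K} for $n=1$ transfers verbatim with $\tfrac{1}{n}$ in place of $1$, which is precisely the claim of the footnote.
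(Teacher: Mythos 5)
Your opening observation is correct and matches the easy half of the situation: since $\Psi(0)=0$ and $g\mapsto\Psi(g)(y)$ is continuous, every $y\in Y$ admits a finite $A_y\subseteq X$ and $m_y\in\NN$ with $\Psi(O_X(A_y;\tfrac{1}{m_y}))\subseteq\overline{O}_Y(y;\tfrac{1}{n})$; and your explanation of the footnote (the value $\tfrac{1}{n}$ enters only as a fixed constant, so the argument for $n=1$ transfers verbatim) is exactly the point the paper makes. Note also that the paper itself offers no proof of Theorem \ref{podzialy}: it is stated as a consequence of the proof of Theorem 3.1 in \cite{K}, so deferring the hard work to \cite{K} is, in itself, consistent with what the paper does.

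The problem is that where your reconstruction of \cite{K} becomes specific, it is unjustified or wrong, and the two steps it leaves open are precisely the content of the theorem. First, the claim that $Y_{p,m}=\{y:\exists A,\;|A|\le p,\;\Psi(O_X(A;\tfrac{1}{m}))\subseteq\overline{O}_Y(y;\tfrac{1}{n})\}$ is $G_\delta$ has no proof behind it. For a fixed tuple $(x_1,\ldots,x_p)$ the condition on $y$ is closed (an intersection over $g\in O_X(\{x_1,\ldots,x_p\};\tfrac{1}{m})$ of the closed sets $\{y:|\Psi(g)(y)|\le\tfrac{1}{n}\}$), and the condition is even closed jointly in $(x_1,\ldots,x_p,y)\in X^p\times Y$; but then $Y_{p,m}$ is the projection along the noncompact factor $X^p$ of a closed set, and such projections are in general only analytic, not $G_\delta$ or even Borel. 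Nothing about "testing membership through the separately continuous evaluation" improves this, so the decomposition cannot be obtained by first writing down this bare existential formula and refining afterwards; in \cite{K} the pieces are constructed together with their witnessing data. Second, the continuous-selection step, which you rightly call the main obstacle, is not carried out, and neither suggested mechanism works as stated. "Pinning each support point down to a single small base element" does not yield continuity unless the pinning is done coherently at every scale, and there is no canonical object to pin: for a homeomorphism (unlike for a linear map, where supports are essentially unique minimal sets) the admissible sets $A$ are badly non-unique, so "the $i$-th support point of $y$" is not even well defined. The fallback via a "Michael-type continuous selection" is inapplicable: the set-valued map $y\mapsto\{(x_1,\ldots,x_p)\in X^p:\Psi(O_X(\{x_1,\ldots,x_p\};\tfrac{1}{m}))\subseteq\overline{O}_Y(y;\tfrac{1}{n})\}$ has closed graph but is not lower semicontinuous and has no convexity, so Michael's theorem gives nothing, while measurable-selection theorems would not give continuity. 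As it stands, everything beyond the routine first paragraph is attributed to \cite{K} under descriptions that do not hold up; a correct write-up must either reproduce the actual construction of \cite{K} or simply cite it, as the paper does.
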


We will need the following lemma.

\begin{lemma}\label{lemat Witka}
Let $X$ and $Y$ be infinite Tychonoff spaces and let $\Psi:C_p(X)\to C_p(Y)$ be a homeomorphism. For any finite set $A\subseteq X$, there exists  
a finite set $B\subseteq Y$, such that, for any $y\in Y\setminus B$ and  $r\in\mathbb{R}$, there is a function $f\in C_p(X)$
such that $f\upharpoonright A=0$,  and $\Psi(f)(y)=r$. 
\end{lemma}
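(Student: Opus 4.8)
The plan is to pass to the inverse homeomorphism and reduce the statement to a surjectivity property of one auxiliary map. First I would normalize: replacing $\Psi$ by $f\mapsto\Psi(f)-\Psi(0)$ (a homeomorphism of $C_p(X)$ onto $C_p(Y)$, since translations of $C_p(Y)$ are homeomorphisms) I may assume $\Psi(0)=0$; this does not affect the statement, because prescribing $\Psi(f)(y)=r$ for all $r$ is unchanged under adding the fixed number $\Psi(0)(y)$. Write $L_A=\{f\in C_p(X):f\upharpoonright A=0\}$, a closed linear, hence convex and connected, subspace, and let $\Phi=\Psi^{-1}$. Since $A$ is finite and $X$ is Tychonoff, the restriction map $\rho_A\colon C_p(X)\to\mathbb{R}^A$ admits a continuous linear section, so it is an open linear surjection with kernel $L_A$; consequently $q:=\rho_A\circ\Phi\colon C_p(Y)\to\mathbb{R}^A$ is a continuous \emph{open} surjection with $q^{-1}(0)=\Psi(L_A)=:M_A$. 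In these terms the conclusion becomes: there is a finite $B\subseteq Y$ such that $\{g(y):g\in M_A\}=\mathbb{R}$ for every $y\notin B$.

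Because $M_A$ is connected and $g\mapsto g(y)$ is continuous, $I_y:=\{g(y):g\in M_A\}$ is an interval, and it contains $0$ since the zero function of $C_p(Y)$ lies in $M_A$ (as $q(0)=0$). Hence it suffices to show that $I_y$ is unbounded above and below for all but finitely many $y$, and by symmetry (apply the argument to the homeomorphism $f\mapsto-\Psi(f)$, for which $I_y$ becomes $-I_y$) it is enough to bound the set of $y$ where $I_y$ is bounded above. I would produce $B$ from continuity of $q$ at $0$: choose a finite $B\subseteq Y$ and $\delta>0$ with the property that $\max_{b\in B}|g(b)|<\delta$ implies $q(g)\in(-1,1)^A$. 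The subspace $N_B=\{g\in C_p(Y):g\upharpoonright B=0\}$ is invariant under scalar multiplication, and every multiple $tg$ of $g\in N_B$ satisfies the hypothesis, so $q(N_B)\subseteq(-1,1)^A$. Thus the constraint cutting out $M_A$ is governed by the coordinates in $B$: a function vanishing on $B$ cannot move $\Phi(\,\cdot\,)\upharpoonright A$ out of the unit cube. Enlarging $B$ if necessary I also arrange $|B|\ge|A|$, which preserves this boundedness.

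The heart of the proof is to show that $y\notin B$ forces $I_y=\mathbb{R}$; the intuition is that, as the value of $q$ is controlled by the finitely many coordinates in $B$ and $y\notin B$, the coordinate $g(y)$ is free along the fiber $M_A$. To realize this I would fix $y\notin B$ and $r\in\mathbb{R}$, pick $e_z\in C_p(Y)$ for $z\in B\cup\{y\}$ with $e_z\upharpoonright(B\cup\{y\})=\delta_z$, and search inside the finite-dimensional family $g_\lambda=\sum_{z\in B\cup\{y\}}\lambda_z e_z$: prescribing $\lambda_y=r$ reduces the problem to finding a zero of the continuous map $\lambda'\mapsto q\big(g_{(r,\lambda')}\big)$ from $\mathbb{R}^B$ to $\mathbb{R}^A$. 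At $r=0$, $\lambda'=0$ this map vanishes, and the boundedness fact forces the growth of $q$ to be carried by the $B$-coordinates; together with the openness of $q$ this should allow a degree/continuation argument showing that the zero persists as $r$ ranges over all of $\mathbb{R}$, producing $g\in M_A$ with $g(y)=r$.

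I expect this last step to be the main obstacle. Every naive attempt to produce $g\in M_A$ with $g(y)$ large is circular: the equality $0\in q(\{g:g(y)=r\})$ one wants to verify is literally the assertion $r\in I_y$. Breaking the circularity requires upgrading the pointwise openness of $q$ to the uniform statement that the fiber $q^{-1}(0)$ meets every affine hyperplane $\{g:g(y)=r\}$, and this is where both directions of the homeomorphism must be used together: continuity of $\Psi$ makes $I_y$ an interval, continuity of $\Phi$ yields the finite set $B$ and the boundedness of $q$ on $N_B$, while a degree-theoretic (or Baire-category) argument is needed to ensure that the continuation of the zero through the finite-dimensional slices never escapes to infinity. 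Establishing the appropriate properness of $\lambda'\mapsto q\big(g_{(r,\lambda')}\big)$ from the boundedness fact is the delicate point on which the whole lemma turns.
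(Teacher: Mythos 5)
Your reductions are sound as far as they go: normalizing $\Psi(0)=0$, passing to $q=\rho_A\circ\Psi^{-1}$, observing that $M_A=\Psi(C_{p,A}(X))$ is connected so that $I_y=\{g(y):g\in M_A\}$ is an interval containing $0$, and the symmetry trick reducing to unboundedness above, are all correct. But the proof stops exactly where the lemma lives: you never establish that $y\notin B$ forces $I_y=\mathbb{R}$, you only describe a hoped-for degree/continuation argument and yourself flag its circularity. This is not a technical loose end — it is the entire content of the statement, so the proposal has a genuine gap. Moreover, the specific route you sketch faces real obstructions: (i) openness of $q$ on $C_p(Y)$ does not pass to finite-dimensional affine slices, so the map $\lambda'\mapsto q\bigl(g_{(r,\lambda')}\bigr)$ from $\mathbb{R}^B$ to $\mathbb{R}^A$ carries no a priori degree or surjectivity information; (ii) your boundedness fact $q(N_B)\subseteq(-1,1)^A$ concerns functions vanishing on \emph{all} of $B$, while the slice functions $g_{(r,\lambda')}$ with $\lambda'\neq 0$ do not vanish on $B$, so it gives no properness control along the slice; (iii) most seriously, there is no reason the fiber $q^{-1}(0)$ meets the particular affine slice $re_y+\operatorname{span}\{e_b:b\in B\}$ at all — the lemma only asserts that $q^{-1}(0)$ meets the hyperplane $\{g:g(y)=r\}$, a vastly larger set, so your finite-dimensional reduction aims at a statement that may simply be false for your chosen $e_z$'s.

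For comparison, the paper closes precisely this gap by replacing zero-finding with a homotopy obstruction, and by extracting the finite set from a \emph{family} of functions rather than from continuity of $\Psi^{-1}$ at a single point. Using the factorization $\Phi:\mathbb{R}^A\times C_{p,A}(X)\to C_p(X)$ with $\Phi(f,g)\upharpoonright A=f$, the complement $D(A)=C_p(X)\setminus C_{p,A}(X)$ is identified with $(\mathbb{R}^A\setminus\{0\})\times C_{p,A}(X)$, in which the unit sphere $S\subseteq\mathbb{R}^A$ embeds as a map $\tilde\iota$ that is \emph{not} null-homotopic in $D(A)$. Since $\Psi\circ\tilde\iota(S)$ is compact and basic open sets of $C_p(Y)$ depend on finitely many coordinates, continuity of the induced map on $C(S,\cdot)$ with the compact-open topology yields a finite set $C\subseteq Y$ such that \emph{any} $h:S\to C_p(Y)$ agreeing with $\Psi\circ\tilde\iota$ on the coordinates in $C$ equals $\Psi\circ g$ for some $g$ not null-homotopic in $D(A)$. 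If some $y\notin C$ and some $r$ were unattainable by $\Psi(C_{p,A}(X))$ at $y$, one modifies $\Psi\circ\tilde\iota$ only at the coordinate $y$, forcing the constant value $r$ there, and runs the straight-line homotopy to the constant function $r$: the forbidden value at $y$ acts as a barrier keeping the entire homotopy inside $C_p(Y)\setminus\Psi(C_{p,A}(X))$, and pulling back by $\Psi^{-1}$ null-homotopes a map of the above kind inside $D(A)$ — a contradiction. The two lessons your sketch is missing are exactly these: the topological input must be a homotopy invariant (which transports through the homeomorphism), not a degree of a restriction to a slice (which does not), and the finite set $B$ must control the whole compact sphere's worth of images, not merely a neighborhood of the zero function.
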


\begin{proof} For a subset $A\subseteq X$, let $C_{p,A}(X)$ denote the subspace $\{f\in C_p(X):\; f\upharpoonright A=0\}$. It is well-known that, for any finite $A\subseteq X$, the space $C_p(X)$ is homeomorphic to the product $\mathbb{R}^A\times C_{p,A}(X)$. Indeed, we have $\mathbb{R}^A = C_p(A)$, and if $T: C_p(A)\to C_p(X)$ is a continuous extension operator (see \cite[6.6.5]{vM}), then the map $\Phi:  C_p(A)\times C_{p,A}(X)\to C_p(X)$ defined by $\Phi(f,g)= T(f)+g$, for $f\in  C_p(A)$ and $g\in  C_{p,A}(X)$, is the required homeomorphism. Observe that $\Phi$ has the property, that 
\begin{equation}\label{factor}
\Phi(f,g)\upharpoonright A= f.
\end{equation}
Fix a finite $A\subseteq X$ and suppose that the assertion of the lemma does not hold true. Then there exist a sequence $(y_n)_{n\in\NN}$ of distinct elements of $Y$ and a sequence $(r_n)_{n\in\NN}$ of reals, such that
\begin{equation}\label{negation}
\Psi(f)(y_n)\ne r_n \quad \mbox{for any } f\in C_{p,A}(X) .
\end{equation}
 Let $\|\cdot\|$ be the Euclidean norm in $\mathbb{R}^A$, $S$ be the unit sphere in $(\mathbb{R}^A,\|\cdot\|)$, and 
$G= \mathbb{R}^A\setminus \{(0,0,\dots,0)\}$. Let $\iota: S\to G$ be the identity embedding. Clearly, the map $\iota$ is not homotopic in $G$ to a constant map. Put
\begin{equation}\label{U}
U = \{e: S\to \mathbb{R}^A:\; e \mbox{ is continuous and } \|e(x)-\iota(x)\|<1  \mbox{ for all } x\in S\}.
\end{equation}
Since any map $e\in U$  is homotopic in $G$ to $\iota$, it is also  not homotopic in $G$ to a constant map. 

Let $\bar{\iota}:S\to \mathbb{R}^A\times C_{p,A}(X)$ be the map defined by $\bar{\iota}(x) = (\iota(x),\mathbf{0})$, for $x\in S$, where $\mathbf{0}$ denotes the zero function in $C_{p,A}(X)$. We put $\tilde{\iota}= \Phi\circ\bar{\iota}:S\to C_p(X)$. 

For a topological space $Z$, by $C(S,Z)$ we denote the space of all continuous maps from $S$ into $Z$, equipped with the compact-open topology.

Let $\pi_1: \mathbb{R}^A\times C_{p,A}(X)\to \mathbb{R}^A$ be the projection onto the first axis. We put $V= \{f\in C(S,\mathbb{R}^A\times C_{p,A}(X)): \pi_1\circ f\in U\}$. Clearly, $V$ is an open subset of $C(S,\mathbb{R}^A\times C_{p,A}(X))$, therefore the set $W=\{\Phi\circ f: f\in V\}$ is an open neighborhood of $\tilde{\iota}$ in $C(S,C_p(X))$. 

Let $D(A)=C_p(X)\setminus C_{p,A}(X)$. 
From property (\ref{factor}) it follows that $\Phi(G\times C_{p,A}(X))= D(A)$. Therefore, one can easily verify that any map $g\in W$ is homotopic in $D(A)$ to $\tilde{\iota}$, hence it is  not homotopic in $D(A)$ to a constant map.

The set $O= \{\Psi\circ g: g\in W\}$ is open in $C(S,C_p(Y))$. Since basic open sets in $C_p(Y)$ depend on finitely many coordinates, we can find a finite set $C\subseteq Y$ such that any $h\in C(S,C_p(Y))$ satisfying
\begin{equation}\label{C}
 h(x)\upharpoonright C = \Psi\circ\tilde{\iota}(x)\upharpoonright C \quad \mbox{for all } x\in S
\end{equation}
belongs to $O$.
Find $y_n\notin C$ and put $D=C\cup\{y_n\}$. Let $ \Theta: \mathbb{R}^D\times C_{p,D}(Y)\to C_p(Y)$ be a homeomorphism such that
\begin{equation}\label{factor2}
\Theta(f,g)\upharpoonright D= f \quad \mbox{for } f\in \mathbb{R}^D, g\in  C_{p,D}(Y),
\end{equation}
 cf. (\ref{factor}). 
 Let $h:S\to \mathbb{R}^D$ be the map defined by
\begin{equation}\label{h}
 h(x)\upharpoonright C = \Psi\circ\tilde{\iota}(x)\upharpoonright C \quad \mbox{and}\quad  h(x)(y_n) = r_n \quad \mbox{for all } x\in S,
\end{equation} 
$\bar{h}: S\to \mathbb{R}^D\times C_{p,D}(Y)$ be defined by $\bar{h}(x) = (h(x),\mathbf{0})$, for $x\in S$, where $\mathbf{0}$ denotes the zero function in $C_{p,D}(Y)$. Finally, we put $\tilde{h}= \Theta\circ\bar{h}:S\to C_p(Y)$. 
  
By (\ref{C}), (\ref{factor2}), and (\ref{h}) we have $\tilde{h}\in O$. Let $r\in C_p(Y)$ be the constant function taking value $r_n$. Consider the homotopy $H: S\times [0,1]\to C_p(Y)$ defined by
\begin{equation}\label{homotopy}
 H(x,t) = (1-t)\tilde{h}(x) + tr \quad \mbox{for } x\in S,
\end{equation}
and joining $\tilde{h}$ with the constant map. Let  $h_t: S\to C_p(Y)$ be defined by $h_t (x)= H(x,t)$.
Observe that, by (\ref{factor2}) and (\ref{h}), for any $t\in [0,1]$ and $x\in S$, $h_t (x)(y_n) = r_n$, hence from (\ref{negation}) it follows that $h_t(S)\subseteq C_p(Y)\setminus \Psi(C_{p,A}(X))$. Therefore the homotopy $\Psi^{-1}\circ H: S\times [0,1]\to C_p(X)$ takes values in $D(A)$ and joins the map $\Psi^{-1}\circ\tilde{h} \in W$ with the constant map $\Psi^{-1}\circ h_1$, a contradiction.
\end{proof}

\section{The rigid Bernstein set $B$}

Let us briefly recall the construction of the rigid Bernstein set $B$ going back to K.\ Kuratowski \cite{Ku}, and used by R.\ Pol in \cite{P}: let $\{(C_\alpha,f_\alpha): \alpha< 2^\omega\}$ be the collection of all pairs $(C,f)$, where $C$ is a copy of the Cantor set in $\mathbb{R}$ and $f:C\to \mathbb{R}$ is a continuous map with uncountable range $f(C)$ disjoint from $C$. We choose inductively distinct points $x_0,y_0,\dots,x_\alpha,y_\alpha,\dots$ with $x_\alpha\in C_\alpha$ and $y_\alpha= f(x_\alpha)$, and we put $B = \{x_\alpha: \alpha< 2^\omega\}$. A more detailed description of this construction can be found in \cite[Example 6.13.1]{vM}.

Recall that a space $X$ is a Baire space if the Baire Category Theorem holds  for $X$, i.e.\ every sequence $(U_n)$ of dense open subsets of $X$ has a dense intersection in $X$.

\begin{lemma}\label{Bernstein1} Each $G_\delta$-subspace of $B$ is a Baire space.
\end{lemma}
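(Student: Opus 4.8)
The plan is to reduce the statement to the \emph{Bernstein property} of $B$ together with a single non-meagerness assertion. First I would record that $B$ is a Bernstein set: both $B$ and $\mathbb{R}\setminus B$ meet every Cantor set (equivalently, every uncountable closed subset of $\mathbb{R}$). This is read off from the construction. Given a Cantor set $C$, a translation $f(x)=x+t$ with $t$ large makes $(C,f)$ one of the listed pairs, so the chosen point $x_\alpha\in C$ lies in $B$; and translating a disjoint copy $D$ of $C$ back onto $C$ produces a pair with $y_\alpha=f_\alpha(x_\alpha)\in C$ and $y_\alpha\notin B$. Thus every Cantor set meets $B$ and its complement.

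Next I would reformulate Baireness. A space is Baire if and only if none of its nonempty open subsets is meager in itself, and every open subset of a $G_\delta$-subspace of $B$ is again a $G_\delta$-subspace of $B$: if $Y=B\cap G$ with $G$ a $G_\delta$ in $\mathbb{R}$ and $W$ is open, then $Y\cap W=B\cap(G\cap W)$ and $G\cap W$ is again $G_\delta$. Hence it suffices to prove that every nonempty $G_\delta$-subspace $Y=B\cap G$ is \emph{non-meager in itself}. Replacing $G$ by $G\cap\overline{Y}$ (a closed set is $G_\delta$ in a metric space) we may also assume that $Y$ is dense in $G$; note that $G$ is then Polish.

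Now suppose, towards a contradiction, that $Y=\bigcup_n F_n$ with each $F_n$ closed and nowhere dense in $Y$. Put $E_n=\overline{F_n}^{\,G}$. Density of $Y$ in $G$ gives $E_n\cap Y=F_n$ and forces each $E_n$ to be nowhere dense in $G$; consequently $D=G\setminus\bigcup_n E_n$ is a dense $G_\delta$ in $G$ with $D\cap Y=\emptyset$. If $D$ contained a Cantor set $K$, then $K\subseteq G\setminus Y$ would give $K\subseteq\mathbb{R}\setminus B$, contradicting the Bernstein property. So the whole proof comes down to producing such a $K$.

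The main obstacle is exactly this extraction, because ``nowhere dense in $G$'' need not restrict to ``nowhere dense'' on a closed subset, and $G$ may fail to be perfect. I would resolve it using the perfect kernel $K_G$ of $G$, whose complement (the scattered part) is countable. For each $n$ I claim that $E_n\cap K_G$ is nowhere dense in $K_G$: otherwise some nonempty $G$-open set $O$ satisfies $O\subseteq E_n\cup(G\setminus K_G)$, so $O$ is the union of the meager set $O\cap E_n$ and a countable set; by the Baire property of $O$ the countable part is non-meager, hence contains a point isolated in $G$, which by density lies in $Y$ and therefore in some $F_m$, contradicting that $F_m$ is nowhere dense. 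Granting the claim, $\bigcup_n(E_n\cap K_G)$ is meager in the perfect Polish space $K_G$, so $K_G\setminus\bigcup_n E_n$ is a dense $G_\delta$ in $K_G$, hence a nonempty perfect Polish space, and thus contains a Cantor set $K\subseteq D$. This is the desired contradiction. Finally, when $K_G=\emptyset$ the space $G$ is countable and scattered, so $Y$ is scattered and trivially non-meager in itself.
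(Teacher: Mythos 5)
Your proof is correct, but it takes a genuinely different route from the paper's. The paper first reduces to \emph{closed} subspaces of $B$ (using that a dense $G_\delta$-subspace of a Baire space is Baire), and then invokes Hurewicz's theorem (cited from van Mill's book): a closed $G\subseteq B$ is Baire provided it contains no closed copy of the rationals. A closed copy $Q$ of the rationals would make $\overline{Q}\setminus Q$ an uncountable $G_\delta$-set in $\mathbb{R}$ disjoint from $B$, hence containing a Cantor set missing $B$, contradicting the Bernstein property. You avoid Hurewicz's theorem altogether: you reduce Baireness to non-meagerness-in-itself of nonempty $G_\delta$-subspaces, embed such a subspace $Y$ densely in a Polish $G_\delta$-set $G\subseteq\mathbb{R}$, and show that meagerness of $Y$ in itself would yield a comeager $G_\delta$-set $D\subseteq G$ disjoint from $B$, from which you extract a Cantor set --- arriving at the same terminal contradiction with the Bernstein property (your translation argument for that property is also fine; the paper just cites it). What each approach buys: yours is self-contained modulo the Baire category theorem and the perfect set property of Polish spaces, while the paper's is shorter because the delicate category-theoretic work is packaged into the cited Hurewicz theorem. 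One simplification you missed: the perfect-kernel machinery is unnecessary, because under your contradiction hypothesis the set $G$ can have no isolated points at all --- any isolated point $x$ of $G$ lies in $Y$ by density, and then $\{x\}$ is a nonempty open subset of $Y$ contained in some $F_m$, contradicting nowhere density (this is exactly the observation buried inside the proof of your Claim). Hence $K_G=G$ is perfect, the Cantor set comes straight out of $D$, and your final countable-scattered case is vacuous.
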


\begin{proof} Observe that $B$ is a Bernstein set, i.e. both $B$ and $\mathbb{R}\setminus B$ intersect each copy of the Cantor set in $\mathbb{R}$ (see \cite[Example 6.13.1, Claim 1]{vM}).  Since any dense $G_\delta$-subspace of a Baire space is also a Baire space, it is enough to show the thesis of the lemma for closed subsets of $B$.
Let $G$ be a closed subset of $B$.
By Hurewicz theorem, in order to prove that $G$ is a Baire space, it is enough to check that $G$ does not contain a closed copy of the rationals (see, \cite[Theorem 1.9.12]{vM}). Striving for a contradiction, suppose that $Q$ is a closed subset of $G$ homeomorphic to the rationals. Then the closure $\overline{Q}$ of $Q$ in $\mathbb{R}$ is a perfect subset of $\mathbb{R}$, hence its uncountable. Then the set $\overline{Q}\setminus Q\subseteq \mathbb{R}\setminus B$ is an uncountable  $G_\delta$-set in $\mathbb{R}$, therefore it contains a copy of the Cantor set disjoint from $B$, a contradiction.
\end{proof}

For our purposes we will need a stronger version of rigidity of $B$ than used in \cite{P}.

\begin{lemma}\label{Bernstein2} If $G$ is a nonempty $G_\delta$-subset of  $B$,  then each continuous function $f:G\to B$ is either the identity or is constant on a nonempty relatively open subset of $G$.
\end{lemma}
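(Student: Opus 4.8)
The plan is to assume that $f$ is not constant on any nonempty relatively open subset of $G$ and to show that then $f$ must be the identity; this is exactly the desired dichotomy. So suppose toward a contradiction that $f\neq\mathrm{id}$. Since both $f$ and the inclusion $G\hookrightarrow\mathbb{R}$ are continuous, the set $U=\{x\in G:\ f(x)\neq x\}$ is a nonempty relatively open subset of $G$, and every nonempty relatively open subset of $U$ is relatively open in $G$, so $f$ is not constant on any such subset either. Because $U$ is $G_\delta$ in $B$, Lemma \ref{Bernstein1} makes $U$ a Baire space, and a short Baire-category argument then shows $f(U)$ is uncountable: were it countable, one of the sets $f^{-1}(r)$ (closed in $U$, for $r\in f(U)$) would have nonempty interior, yielding a nonempty relatively open set on which $f$ is constant.

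The idea is now to manufacture, out of $f|_U$, one of the pairs $(C,\varphi)$ used to build $B$ — a Cantor set $C\subseteq\mathbb{R}$ together with a continuous $\varphi\colon C\to\mathbb{R}$ with uncountable range disjoint from $C$ — arranged so that $\varphi$ sends every point of $C\cap B$ into $B$. This will contradict the defining property of $B$, which guarantees a point $x\in C\cap B$ with $\varphi(x)\notin B$. Write $G=B\cap P$ and $U=B\cap P'$ with $P,P'$ being $G_\delta$ subsets of $\mathbb{R}$ (possible since $U$ is $G_\delta$ in $B$). By the Kuratowski extension theorem, the continuous map $f|_U\colon U\to\mathbb{R}$ extends to a continuous map $g\colon H_0\to\mathbb{R}$ on a $G_\delta$ set $H_0$ with $U\subseteq H_0\subseteq\overline{U}$. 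Put $H=H_0\cap P'$; this is again a $G_\delta$ subset of $\mathbb{R}$ containing $U$, and, crucially, $H\cap B\subseteq P'\cap B=U$. Hence \emph{any} subset $C\subseteq H$ satisfies $C\cap B\subseteq U$, and there $g=f$ takes values in $B$.

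It remains to find a Cantor set $C\subseteq H$ on which $g$ is injective (so that $g(C)$ is uncountable) and with $g(C)\cap C=\varnothing$. For each $x\in U$ we have $g(x)=f(x)\neq x$, so by continuity there is an open interval $R_x\ni x$ with $g(R_x\cap H)$ contained in an interval disjoint from $R_x$; covering the separable set $U$ by countably many such intervals and using that $f(U)$ is uncountable, we fix one interval $R$ for which $g(R\cap H)$ is uncountable while $g(R\cap H)\cap R=\varnothing$. Inside the Polish space $R\cap H$ a standard descriptive-set-theoretic fact (a continuous map with uncountable range is injective on a Cantor set) yields a Cantor set $C\subseteq R\cap H$ with $g|_C$ injective; then $g(C)$ is uncountable, and, as $C\subseteq R$ while $g(C)\subseteq g(R\cap H)$ misses $R$, we get $g(C)\cap C=\varnothing$. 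Thus $(C,g|_C)$ is one of the defining pairs, so some $x\in C\cap B$ satisfies $g(x)\notin B$; but $C\cap B\subseteq U$ forces $g(x)=f(x)\in B$, a contradiction. I expect the main obstacle to be precisely the requirement that $g$ send $C\cap B$ into $B$: since the Bernstein set $B$ contains no Cantor set, $C$ must lie mostly in $\mathbb{R}\setminus B$, and the entire purpose of intersecting with $P'$ is to force the few points of $C$ that do land in $B$ to lie in $U$, where $g$ agrees with $f$.
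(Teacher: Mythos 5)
Your proof is correct and follows essentially the same route as the paper's: both restrict $f$ to a set where $f(x)\neq x$, extend by the Lavrentiev--Kuratowski theorem to a $G_\delta$ set arranged (by intersecting with a suitable $G_\delta$ envelope) to meet $B$ only inside that set, invoke the standard fact that a continuous map with uncountable range on a Polish space is injective on some Cantor set to produce a forbidden pair $(C_\alpha,f_\alpha)$ from the construction of $B$, and handle the countable-image case via the Baire property from Lemma \ref{Bernstein1}. The only difference is organizational: the paper secures disjointness of $C$ from its image at the start, by choosing $U$ with $\overline{U}\cap\overline{f(U)}=\emptyset$ and extending into $\overline{f(U)}$, whereas you obtain it at the end by a local covering argument with the intervals $R_x$.
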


\begin{proof} Our argument is a slight modification of the proof of Claim 3 in \cite[Example 6.13.1]{vM}. 

Suppose that $f$ is not the identity. Then we can find a nonempty relatively open subset $U$ of $G$ such that the closures (taken in $\mathbb{R}$) $\overline{U}$ and $\overline{f(U)}$ are disjoint. We will show that $f(U)$ is countable. Assume towards a contradiction that this is not the case. By Lavrentiev theorem $f\upharpoonright U$ can be extended to a continuous function $\tilde{f}: S\to \overline{f(U)}$, where $S$ is a $G_\delta$-subset of $\overline{U}$. Since $U$ is a $G_\delta$-subset of $B$, we can assume (shrinking $S$, if necessary) that $S\cap B = U$. The image $\tilde{f}(S)$ is uncountable since it contains $f(U)$. Therefore, by \cite[Theorem 1.5.12]{vM} there exists a copy $C$ of the Cantor set in $S$ such that $\tilde{f}$ is one-to-one on $C$. Then there exists $\alpha< 2^\omega$ such that $(C,\tilde{f}\upharpoonright C) = (C_\alpha,f_\alpha)$. Hence $x_\alpha\in C\cap B\subseteq S\cap B = U$, so $y_\alpha = f_\alpha(x_\alpha) = \tilde{f}\upharpoonright C (x_\alpha) = f(x_\alpha)\in B$, which is a contradiction with the construction of $B$.

Now, for every $t$ in the countable set $f(U)$, let $A_t = \{x\in U: f(x)=t\}$. Then $\{A_t: t\in f(U)\}$ is a countable cover of $U$ by relatively closed sets. By Lemma \ref{Bernstein1} $U$ is a Baire space, therefore one of the sets $A_t$ has a nonempty interior in $U$, hence also in $G$.
\end{proof}

\begin{col}\label{sztywnosc} If $G$ is an uncountable $G_\delta$-subset of  $B$,  then for  each continuous function $f:G\to B$ there exists an uncountable $G_\delta$-subset $G'$ of $G$ such that the restriction $f\upharpoonright G'$ is either the identity or is constant.
\end{col}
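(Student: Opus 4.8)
The plan is to deduce the corollary from Lemma~\ref{Bernstein2}, but only after first replacing $G$ by a subset all of whose nonempty relatively open pieces are uncountable. This replacement is essential: Lemma~\ref{Bernstein2} merely produces \emph{some} nonempty relatively open set on which $f$ is constant, and a priori such a set could be countable, whereas the corollary demands an uncountable $G_\delta$ witness. So the first task is to extract from $G$ its ``uncountable core''.

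Concretely, I would let $P$ be the set of condensation points of $G$ that lie in $G$ (i.e.\ the points $x\in G$ every relative neighborhood of which is uncountable). Since $G$ is second countable, a standard argument gives that $G\setminus P$ is countable: fixing a countable base of $G$, the non-condensation points are precisely those contained in a countable basic set, and the union of all countable basic sets is countable. Hence $P$ is uncountable (as $G$ is), and $P$ is relatively closed in $G$, because $G\setminus P$ is a union of basic open sets and therefore open in $G$. Consequently $P$ is a $G_\delta$-subset of $B$: being relatively closed in the $G_\delta$-set $G$, it is the intersection of $B$ with a $G_\delta$-subset of $\mathbb{R}$.

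The key property I would record is that every nonempty relatively open subset $W$ of $P$ is uncountable and is again a $G_\delta$-subset of $B$. For the first part, write $W=P\cap V$ with $V$ relatively open in $G$ and pick $x\in W$; then $V$ is uncountable because $x$ is a condensation point, while $V\setminus P\subseteq G\setminus P$ is countable, so $W=V\setminus(V\setminus P)$ is uncountable. The $G_\delta$ claim follows exactly as for $P$. With this in hand, I would apply Lemma~\ref{Bernstein2} to the nonempty $G_\delta$-set $P$ and the continuous map $f\upharpoonright P\colon P\to B$. If $f\upharpoonright P$ is the identity, then $G'=P$ is the required uncountable $G_\delta$-subset on which $f$ is the identity. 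Otherwise $f\upharpoonright P$ is constant on some nonempty relatively open $W\subseteq P$, and by the property just recorded $G'=W$ is an uncountable $G_\delta$-subset on which $f$ is constant. In either case we obtain the desired $G'$.

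The only genuinely delicate point is the passage to $P$: this is what upgrades the ``nonempty open'' output of Lemma~\ref{Bernstein2} to an ``uncountable'' one, and without it the lemma alone does not suffice. Everything else amounts to routine bookkeeping about how the notions ``$G_\delta$ in $B$'', ``relatively open'', and ``relatively closed'' interact for subspaces of $\mathbb{R}$.
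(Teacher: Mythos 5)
Your proof is correct and takes essentially the same route as the paper: the paper also first passes to the ``uncountable core'' of $G$, defining $H=G\setminus\bigcup\{U\in\mathcal{U}: U\cap G \mbox{ countable}\}$ for a countable base $\mathcal{U}$, which is exactly your set $P$ of condensation points of $G$ lying in $G$, and then applies Lemma~\ref{Bernstein2} to it. Your write-up just makes explicit the bookkeeping (uncountability and $G_\delta$-ness of the relatively open witness) that the paper leaves to the reader.
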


\begin{proof} Let $\mathcal{U}$ be a countable base in $B$, and let $V=\bigcup\{U\in\mathcal{U}: U\cap G \mbox{ is countable} \}$. Then $H = G\setminus V$ is a nonempty $G_\delta$-subset of $B$, and each nonempty open subset of $H$ is uncountable. It remains to apply Lemma \ref{Bernstein2} for $H$ and $f\upharpoonright H$.
\end{proof}

\section{Proof of Theorem \ref{Tw.1}}

Strengthening a result from \cite{P}, we shall prove that the spaces $C_p(B)$ and $C_p(B)\times C_p(B)$
are not homeomorphic. Of course $C_p(B)\times C_p(B)$ is linearly homeomorphic to $C_p(B\oplus B)$, where
$B\oplus B$ is a discrete sum of two copies of $B$ and thus can be viewed as $B\times\{1,2\}$.

It will be convenient to use the following notation:
$$A_i=A\times \{i\}\subseteq B\oplus B\text{, }i=1,2,$$
for a subset $A\subseteq B$.
Similarly, $x_i=(x,i)\in B\oplus B$, for any $x\in B$.
Thus $A_i$ is a copy of $A$ lying in the corresponding copy of $B$ in the space $B\oplus B$.

\medskip

Striving for a contradiction, suppose that there is a homeomorphism $$\Phi:C_p(B)\to C_p(B \oplus B).$$
It is clear that without loss of generality we can assume that $\Phi$ takes the zero function to the zero function.

From Theorem \ref{podzialy} (applied with $n=1$, $X=B\oplus B$, $Y=B$ and $\Psi=\Phi^{-1}$) it follows that
$B$ is a countable union of $G_\delta$ subsets $G_r$ satisfying property (A). Fix $r$ such that $G_r$ is uncountable and consider finitely many continuous functions
$f'_1,\ldots , f'_{p'}:G_r\to B\oplus B$ provided by Theorem \ref{podzialy}.

For any $j\leq p'$ and $i\in\{1,2\}$ the set $(f'_j)^{-1}(B_i)$ is open in $G_r$. Thus, for any uncountable $G_\delta$ subset $G\subseteq G_r$ and any $j\leq p'$,
there is an uncountable $G_\delta$ subset $G'\subseteq G$ with $f'_j(G')\subseteq B_1$ or $f'_j(G')\subseteq B_2$.
Applying this observation successively, for $j=1,\ldots,p'$, we can find an uncountable $G_\delta$ set $H\subseteq G_r$
such that $f'_j(H)\subseteq B_1$ or $f'_j(H)\subseteq B_2$, for $j\leq p'$. 

By Corollary \ref{sztywnosc}, there is an uncountable $G_\delta$ set $H^1\subseteq H$, such that the function $f'_1\upharpoonright H^1$ is either the identity
(up to identification of $H^1_i$ with $H^1$)
or is constant. Applying Corollary \ref{sztywnosc} recursively, we can construct a decreasing sequence $G_r\supseteq H \supseteq H^1\supseteq \ldots \supseteq H^{p'}$
of uncountable $G_\delta$ subsets of $B$
such that, for $j\leq p'$, $f'_j\upharpoonright H^j$ is either the identity or is constant. Indeed, if $H^j$ is constructed, where $j<p'$,
we consider the function $f'_{j+1}\upharpoonright H^j$ and apply Corollary \ref{sztywnosc} to find a desired uncountable $G_\delta$ set $H^{j+1}\subseteq H^j$.

If $C'=H^{p'}$ then
each $f'_j\upharpoonright C'$ is either the identity (up to identification of $C'_i$ with $C'$) or is constant.
Hence, there is a finite set $J'\subseteq B\oplus B$ such that
$$\{f'_1(x), \ldots, f'_{p'}(x)\}\subseteq \{x_1,x_2\}\cup J' \text{, for any } x\in C'.$$

Property (A) from Theorem \ref{podzialy} implies that there is $k\in \NN$ such that
\begin{align}\label{*}
\Phi^{-1}(O_{B\oplus B}(\{x_1,x_2\}\cup J';\tfrac{1}{k}))\subseteq \overline{O}_B(x;1) \text{, for any }x\in C'.
\end{align}

Now, applying Theorem \ref{podzialy} once more (with $n=2k$, $X=B$, $Y=B\oplus B$ and $\Psi=\Phi$) together with Corollary \ref{sztywnosc} (applied recursively
as before),
we can find an uncountable $G_\delta$ set $C\subseteq C'$ and finitely many continuous functions (being the restriction of functions provided by property (A) from Theorem
\ref{podzialy})
$f^1_1,\ldots , f^1_p:C_1\to B$ and
$f^2_1,\ldots , f^2_q:C_2\to B$ such that
each $f^1_i, f^2_i$ is either the identity (up to identification of $C_i$ with $C$) or is constant.
Hence, there is a finite set $J\subseteq B$ such that
$$\{f^1_1(x_1), \ldots, f^1_{p}(x_1)\}\cup \{f^2_1(x_2), \ldots, f^2_{q}(x_2)\}\subseteq \{x\}\cup J \text{, for any } x\in C.$$
Property (A) from Theorem \ref{podzialy} implies that there is $m\in\NN$ such that
\begin{align}\label{**}
\Phi(O_{B}(\{x\}\cup J;\tfrac{1}{m}))\subseteq \overline{O}_{B\oplus B}(\{x_1,x_2\};\tfrac{1}{2k}) \text{, for any }x\in C.
\end{align}

By the continuity of $\Phi^{-1}$, there is a finite set $I\subseteq B\oplus B$ and $\varepsilon>0$ such that
\begin{align}\label{zbior I}
\Phi^{-1}(O_{B\oplus B}(I;\varepsilon))\subseteq O_B(J;\tfrac{1}{m}).
\end{align}
 
By Lemma \ref{lemat Witka} (where $X=B\oplus B$, $Y=B$, $\Psi=\Phi^{-1}$, $A=I\cup J'$) there are $v_1, v_2\in C_p(B\oplus B)$ and
$c\in C$ such that

\begin{enumerate}
 \item[(i)]$c_1, c_2\notin I\cup J'$,
 \item[(ii)]$v_1\upharpoonright(I\cup J')=0$, $v_2\upharpoonright(I\cup J')=0$,
 \item[(iii)]$\Phi^{-1}(v_1)(c)>2$, $\Phi^{-1}(v_2)(c)<-2$.
\end{enumerate}

\medskip

\textbf{Claim 1.} $|v_i(c_1)|\geq\tfrac{1}{k}$ or $|v_i(c_2)|\geq\tfrac{1}{k}$, for $i=1,2$.
\begin{proof}
If not, then by (ii) and \eqref{*} we would have $|\Phi^{-1}(v_i)(c)|\leq 1$, contradicting (iii).
\end{proof}
So let $i,j\in\{1,2\}$ be such that
\begin{align}\label{i,j}
|v_1(c_i)|\geq\tfrac{1}{k} \text{ and } |v_2(c_j)|\geq\tfrac{1}{k}.
\end{align}
%
%
We shall consider two cases:

\medskip

\textit{Case 1.} 
$v_1(c_1)\cdot v_2(c_2)=v_1(c_2)\cdot v_2(c_1).$
Let
\begin{align*}\label{j'}
j'=j+1 \mod 2.
\end{align*}
By the continuity of $\Phi^{-1}$, there is $\delta>0$ and $h\in C_p(B\oplus B)$ such that

\begin{equation}\label{funkcja h}
  \left\{\begin{aligned}
  &   h(c_{j'})=v_2(c_{j'})+\delta,\\
&h\upharpoonright (I\cup J'\cup \{c_j\})=v_2\upharpoonright(I\cup J'\cup \{c_j\}),\\
&\Phi^{-1}(h)(c)<-1. 
  \end{aligned}
 \right. 
\end{equation}
We put $u_1=v_1$ and $u_2=h$. Using \eqref{i,j} one can easily verify that $v_1(c_j)\ne 0$, hence
 $$u_1(c_1)\cdot u_2(c_2)\neq u_1(c_2)\cdot u_2(c_1).$$

\medskip

\textit{Case 2.}
$v_1(c_1)\cdot v_2(c_2)\neq v_1(c_2)\cdot v_2(c_1).$ Then we put $u_1=v_1$, $u_2=v_2$.

\bigskip

We define the mapping $\varphi:\mathbb{R}\times\mathbb{R}\to \mathbb{R}$ by the formula
$$\varphi(t_1,t_2)=\Phi^{-1}\Big(t_1 u_1+t_2 u_2\Big)(c),$$
i.e. $\varphi$ is the composition of the mapping $(t_1,t_2)\mapsto t_1 u_1+t_2 u_2$ with $\Phi^{-1}$
and the evaluation functional at $c$.
Consider
$$Z=\{(t_1,t_2)\in \mathbb{R}\times\mathbb{R}: |t_1 u_1(c_1)+t_2 u_2(c_1)|\geq \tfrac{1}{k}\; \text{ or }\; |t_1 u_1(c_2)+t_2 u_2(c_2)|\geq \tfrac{1}{k}\}.$$

Let
\begin{gather*}
 m_1=\{(t_1,t_2)\in\mathbb{R}\times\mathbb{R}:\;t_1 u_1(c_1)+t_2 u_2(c_1)=\tfrac{1}{k}\},\\
 m_2=\{(t_1,t_2)\in\mathbb{R}\times\mathbb{R}:\;t_1 u_1(c_2)+t_2 u_2(c_2)=\tfrac{1}{k}\}.
\end{gather*}
Note, that from the definition of $u_1$ and $u_2$ it follows that the above sets are nonempty,
i.e. it can not happen that $u_1(c_1)=u_2(c_1)=0$ or $u_1(c_2)=u_2(c_2)=0$. Hence $m_1$ and $m_2$ are non-parallel lines.
Indeed, by the definition of $u_1$ and $u_2$, cf. Case 1 and Case 2, we have
$u_1(c_1)\cdot u_2(c_2)\neq u_1(c_2)\cdot u_2(c_1)$, which means exactly that $m_1$ and $m_2$ are not parallel.

Since $m_1$ and $m_2$ are not parallel,
the set $Z$ is connected (being the plane with a parallelogram removed).

\medskip

\textbf{Claim 2.} $\varphi(Z)\subseteq \mathbb{R}\setminus (-\tfrac{1}{m},\tfrac{1}{m})$.
\begin{proof}
Otherwise, by (ii), \eqref{zbior I} and \eqref{funkcja h}
$$\Phi^{-1}\Big(t_1 u_1+t_2 u_2\Big)\in O_M(\{c\}\cup J;\tfrac{1}{m}),$$
for some $(t_1, t_2)\in Z$.
Hence \eqref{**} implies that
\begin{gather*}
|t_1 u_1(c_1)+t_2 u_2(c_1)|\leq \tfrac{1}{2k}<\tfrac{1}{k},\\
|t_1 u_1(c_2)+t_2 u_2(c_2)|\leq \tfrac{1}{2k}<\tfrac{1}{k}.
\end{gather*}
However this contradicts the definition of $Z$.
\end{proof}

By \eqref{i,j} and \eqref{funkcja h}, we have
$(1,0), (0,1)\in Z$. Further, by (iii) and \eqref{funkcja h}, we infer that
$$\varphi(1,0)=\Phi^{-1}(u_1)(c)>2,\quad \varphi(0,1)=\Phi^{-1}(u_2)(c)<-1.$$
This means that $\varphi(Z)\cap(-\infty,-\tfrac{1}{m})\neq\emptyset$, $\varphi(Z)\cap(\tfrac{1}{m}, \infty)\neq\emptyset$ and,
by Claim 2, $\varphi(Z)\cap (-\tfrac{1}{m},\tfrac{1}{m})=\emptyset$. Therefore the set $\varphi(Z)$ is not connected,
a contradiction with connectedness of $Z$. This ends the proof of Theorem \ref{Tw.1}.

\section{Remarks and problems}

The following question of Arhangiel'skii remains open.

\begin{question} Let $X$ be an infinite compact metrizable space. 
Is it true that $C_p(X)$ is homeomorphic to $C_p(X)\times C_p(X)$?
\end{question}

A natural candidate for a counterexample is the Cook continuum $M$ used in \cite{P} and \cite{vMPP} in the context of linear and uniform homeomorphisms.

\subsection{Continuous surjections}


The following old question of Arhangiel'skii is also related to the problem considered in this paper, cf. \cite[Problem 5]{A2}

\begin{question}\label{q1}(Arhangel'skii)
	Is it true that $C_p(X)$ can always be continuously mapped onto its own square $C_p(X)\times C_p(X)$?
\end{question}

Though the above question is open, the affirmative answer is known for some particular classes of spaces such as zero-dimensional compacta, cf. \cite{doktorat}, \cite{Okunev}
or metrizable compact spaces, cf. \cite{doktorat}. For the reader's convenience below we give short proofs of these facts.

\begin{fact}
 If $X$ is a compact zero-dimensional space, then $C_p(X)\times C_p(X)$ is a continuous image of $C_p(X)$. 
\end{fact}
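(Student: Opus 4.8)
The plan is to reduce the statement to constructing a continuous surjection of $C_p(X)$ onto $C_p(X\oplus X)$, where $X\oplus X$ is the discrete sum of two copies of $X$; this suffices because, exactly as recalled for $B$ in Section 4, $C_p(X)\times C_p(X)$ is linearly homeomorphic to $C_p(X\oplus X)$. I first note that maps coming purely from the base space will not be enough in general: a continuous surjection $Z\to X$ only yields an embedding $C_p(X)\hookrightarrow C_p(Z)$, while the restriction operator $C_p(X)\to C_p(F)$ is onto only for closed $F\subseteq X$, and $X\oplus X$ need not embed as a closed subspace of $X$ (for instance when $X$ is a convergent sequence). Thus the desired map must be genuinely nonlinear, and I would build it from a space-filling (Peano-type) construction, using zero-dimensionality to organize the coordinates.

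The engine is a decomposition $C_p(X)\cong A\times B$ together with two elementary facts: a continuous surjection of $A$ onto $A\times A$, and a homeomorphism $B\cong B\times B$. Given these, $C_p(X)=A\times B$ maps continuously onto $A^2\times B$ (space-filling on the first factor, identity on the second), and since $B\cong B^2$ this target is homeomorphic to $A^2\times B^2=(A\times B)^2=C_p(X)^2$, as required. For the scalar/finite-dimensional factor $A$ one takes a Peano surjection $P\colon\mathbb{R}\to\mathbb{R}^2$ normalized so that $P(0)=(0,0)$, which guarantees $P(t)\to(0,0)$ as $t\to 0$ and hence compatibility with pointwise convergence. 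The self-similar factor $B$ is produced from the clopen algebra of $X$: when $X$ admits a clopen set $U$ with $U\cong X$ and $X\setminus U$ mapping onto $X$ (which holds, e.g., whenever $X\cong X\oplus X$, as for the Cantor cube or for $\beta\omega$), one even obtains $C_p(X)\cong C_p(X)^2$ outright; otherwise one isolates a self-doubling ``tail'' of $C_p(X)$ and realizes $B\cong B^2$ by interleaving its coordinates, the model being the homeomorphism $c_0\cong c_0\times c_0$ that drives the convergent-sequence case.

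The main obstacle is to carry out the interleaving/space-filling assembly so that the resulting map is simultaneously continuous \emph{and} onto for the topology of pointwise convergence, rather than for a norm topology. Concretely, one must distribute the coordinates of a function in $C_p(X)$ over blocks governed by a clopen stratification of $X$, apply finite-dimensional Peano surjections block by block, and then check that the assembled function still lies in $C_p(X\oplus X)$ and that every target pair is attained; the delicate point is the uniform control of the Peano maps near the relevant accumulation values, so that convergence of nets in $C_p(X)$ is neither created nor destroyed. This is precisely where zero-dimensionality is essential, since it supplies the clopen partitions along which the construction can be localized and the requisite self-similarity extracted.
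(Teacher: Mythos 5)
Your plan defers all the difficulty to two unproved structural claims, and the first of them is precisely where the proposition lives. The decomposition $C_p(X)\cong A\times B$ with $A$ finite-dimensional and $B\cong B\times B$ is never constructed, and the devices you offer for it are intrinsically countable/metrizable: interleaving coordinates along a clopen stratification, modelled on $c_0\cong c_0\times c_0$, presupposes a splittable countable structure that a general compact zero-dimensional space does not have. Keep in mind that for compact \emph{metrizable} zero-dimensional $X$ the proposition is not news, since there $C_p(X)$ is even linearly homeomorphic to its square (this is recalled in Section 5); the entire content of the statement is the nonmetrizable case, e.g.\ ordinal compacta --- and by Gul'ko's counterexample \cite{G}, cited in the introduction, there are such $X$ with $C_p(X)$ \emph{not} homeomorphic to $C_p(X)\times C_p(X)$, so any correct construction must be genuinely non-invertible for them. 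Concretely, for $X=[0,\omega_1]$ every clopen subset of $[0,\omega_1)$ is either bounded or co-bounded (two disjoint unbounded clopen sets would yield two disjoint closed unbounded subsets of $[0,\omega_1)$, which is impossible), so no clopen partition can ``interleave'' the space into two self-similar halves; your $B\cong B^2$ step has no visible implementation there beyond the words ``isolates a self-doubling tail''.

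The second gap concerns the one concrete map you do have. Surjectivity of the composition operator $f\mapsto P\circ f$ is a \emph{lifting} problem, not a consequence of surjectivity of the Peano map $P$: given a target $h$, one must produce a continuous $g$ with $P\circ g=h$, and such lifts through a space-filling curve generally do not exist. This is exactly the point the paper's proof addresses, and it is the idea missing from your proposal: by Okunev's lemma \cite{Okunev}, which rests on the Marde\v{s}i\'c factorization theorem \cite{Mardesic}, for each $n$ there is a \emph{single} map $\phi_n:2^\omega\to[-n,n]^2$ through which every continuous map from the compact zero-dimensional $X$ into $[-n,n]^2$ factors. Gluing the maps $\phi_n$, embedding $\mathbb{N}\times 2^\omega$ as a closed subset of $\mathbb{R}$, and extending by Tietze produces one map $\psi:\mathbb{R}\to\mathbb{R}^2$ for which the composition operator $f\mapsto\psi\circ f$ is already a continuous surjection of $C_p(X)$ onto $C_p(X,\mathbb{R}^2)\cong C_p(X)\times C_p(X)$; no product decomposition of $C_p(X)$ is needed at all. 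Your closing paragraph acknowledges the continuity-plus-surjectivity obstacle but does not resolve it; the resolution is the universal factorization property, which is where compactness and zero-dimensionality actually enter.
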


\begin{proof} For the purpose of this proof we will identify the square $C_p(X)\times C_p(X)$ with the space $C_p(X,\mathbb{R}^2)$. For any $n\in\mathbb{N}$, let $B_n=[-n,n]^2\subseteq \mathbb{R}^2$. By \cite[Lemma 1]{Okunev} there exists a continuous map $\phi_n: 2^\omega\to B_n$ such that
\begin{align}\label{Ok1}
(\forall f\in C_p(X,B_n))\,(\exists g\in C_p(X,2^\omega))\quad f = \phi_n\circ g
\end{align}
(let us note that the key ingredient of the proof of this lemma is the Marde\v{s}ic factorization theorem \cite{Mardesic}). We define $\phi: \mathbb{N}\times 2^\omega\to \mathbb{R}^2$ by 
\begin{align}\label{Ok2}
\phi(n,x) = \phi_n(x)\quad \mbox{for } n\in\mathbb{N}, x\in 2^\omega.
\end{align}
Let $e$ be a homeomorphism of $\mathbb{N}\times 2^\omega$ onto a closed subset $A$ of $\mathbb{R}$, and let $\psi:\mathbb{R}\to\mathbb{R}^2$ be a continuous extension of the composition $\phi\circ e^{-1}: A\to\mathbb{R}^2$. 

Now, we can define the map $\varphi: C_p(X)\to C_p(X,\mathbb{R}^2)$ as follows
\begin{align}\label{Ok3}
\varphi(f) = \psi\circ f\quad \mbox{for } f\in C_p(X).
\end{align}
Clearly, $\varphi$ is continuous, so it remains to check that it is surjective. Take any $h\in C_p(X,\mathbb{R}^2)$. By compactness of $X$, the image $h(X)$ is contained in some $B_n$. From (\ref{Ok1}) and (\ref{Ok2}), and it follows that there exists a continuous $g: X\to \{n\}\times 2^\omega$ such that $h = \phi\circ g$. By the properties of the maps $e$ and $\psi$, we have $h = \psi\circ e\circ g$, therefore $h = \varphi(e\circ g)$.
\end{proof}

\begin{fact}
 If $X$ is a compact metrizable space, then $C_p(X)\times C_p(X)$ is a continuous image of $C_p(X)$. 
\end{fact}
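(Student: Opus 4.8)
The plan is to reduce everything to the zero-dimensional case settled in the preceding Proposition. First I would dispose of the easy case: if $X$ is countable, then $X$ is zero-dimensional, and the preceding Proposition applies directly. So the real content is the uncountable case, where $X$ contains a closed copy $C$ of the Cantor set $2^\omega$. The guiding idea is that the only source of ``extra room'' needed to split one function into two should live in this Cantor set, whose abundant self-similarity ($2^\omega\cong 2^\omega\oplus 2^\omega$, and even $2^\omega\cong\bigsqcup_{k\in\NN}2^\omega$) is exactly what powers the preceding Proposition.

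Concretely, I would fix a continuous surjection $\pi\colon 2^\omega\to X$ (every compact metrizable space is a continuous image of the Cantor set), which induces a linear topological embedding $\pi^{*}\colon C_p(X)\to C_p(2^\omega)$, $\pi^{*}(f)=f\circ\pi$. On the structural side, since $C$ is closed in the metrizable space $X$, a Dugundji extension operator $T\colon C_p(C)\to C_p(X)$ is available; it is continuous for the pointwise topology because each value $T(f)(x)$ is a locally finite combination of finitely many evaluations $f(c)$, just as for the extension operators used in the proof of Lemma \ref{lemat Witka}. This yields a linear homeomorphism $C_p(X)\cong C_p(2^\omega)\times C_{p,C}(X)$, exhibiting a genuine Cantor factor inside $C_p(X)$. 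Combining this splitting with $C_p(2^\omega)\cong C_p(2^\omega)^{2}$ and the countable self-similarity above, I would try to manufacture the second copy of $C_p(X)$ out of the Cantor factor while keeping the first copy intact, so that the target $C_p(X)^{2}=C_p(X\oplus X)$ is covered.

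The step I expect to be the main obstacle is precisely this manufacturing of a second copy across the non-zero-dimensional part of $X$, i.e. the continuous surjection onto the complementary factor $C_{p,C}(X)$. The naive transfer fails for a concrete reason: $\pi$ has in general no continuous section (for connected $X$ there is none), so there is no continuous retraction of $C_p(2^\omega)$ onto $\pi^{*}(C_p(X))$, and the surjection furnished by the preceding Proposition---being in essence post-composition with a fixed map $\mathbb{R}\to\mathbb{R}^{2}$---does not restrict to a surjection on $\pi^{*}(C_p(X))$; indeed no post-composition map can hit an embedding $X\hookrightarrow\mathbb{R}^{2}$ once $\dim X>0$. Consequently the formal ``retract plus self-similarity'' cancellations run into a wall, and the second copy has to be built by hand. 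To do this I would re-run the Marde\v{s}i\'c-factorization argument underlying the preceding Proposition, factoring each map $X\to B_n$ through the zero-dimensional cover $\pi$ and assembling the preimages so that they descend to $X$; overcoming this descent (fibre-compatibility) issue is the heart of the matter.
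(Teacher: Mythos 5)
Your proposal does not contain a proof: by your own admission it stops at the ``heart of the matter,'' and the repair you sketch for that step cannot be made to work. Suppose, following your plan, that $h\in C_p(X,B_n)$ is lifted to $h\circ\pi\in C_p(2^\omega,B_n)$ and factored, via Okunev's lemma, as $h\circ\pi=\phi_n\circ g$ with $g\in C(2^\omega,2^\omega)$. To ``descend to $X$'' you need $g$ (or some replacement of it) to be constant on the fibres of $\pi$; but $\pi$ is a closed surjection of compacta, hence a quotient map, so a fibre-constant $g$ induces a continuous $\bar g\colon X\to 2^\omega$ with $h=\phi_n\circ\bar g$. If $X$ is connected --- say $X=[0,1]$, a perfectly legitimate instance of the statement --- then $\bar g$ is constant and so is $h$. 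Thus the fibre-compatibility problem is not a technical obstacle to be overcome: \emph{no} scheme that manufactures the missing functions by factoring maps on $X$ through zero-dimensional spaces can produce anything but constants once $X$ is connected. The preliminary reductions you make are correct (a countable compact metrizable space is zero-dimensional; the splitting $C_p(X)\cong C_p(C)\times C_{p,C}(X)$ via a pointwise-continuous extension operator is valid), but they leave untouched the factor $C_{p,C}(X)$, which is exactly where the positive-dimensional difficulty lives. What your plan really requires is a continuous surjection of $C_p(2^\omega)$ onto $C_p(X)$; such a surjection does exist, but its construction needs an idea that is absent from your proposal.

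The paper obtains the statement by an argument that bypasses dimension and factorization altogether and is descriptive-set-theoretic. Take any nontrivial convergent sequence $S=\{x_0,x_1,x_2,\dots\}\subseteq X$ together with its limit. Since $S$ is countable, $C_p(S)$ is a separable metrizable subspace of $\mathbb{R}^S$; it is Borel but not $\sigma$-compact, so by Hurewicz's theorem it contains a closed copy $P$ of the space of irrationals. The space $C(X)\times C(X)$ with the supremum norm is a separable Banach space, hence Polish, so there is a continuous map $h$ from $P$ onto $C(X)\times C(X)$; since $P$ is closed in the metrizable space $C_p(S)$ and the target is a Banach space, $h$ extends to a continuous map $H\colon C_p(S)\to C(X)\times C(X)$ by Dugundji's theorem. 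The norm topology refines the topology of pointwise convergence, so $H$ is also continuous as a map into $C_p(X)\times C_p(X)$, and it is surjective because it extends the surjection $h$. Finally, the restriction map $f\mapsto f\upharpoonright S$ is a continuous surjection of $C_p(X)$ onto $C_p(S)$ (surjectivity by the Tietze extension theorem, $S$ being compact). The composition of these maps is the required continuous surjection of $C_p(X)$ onto $C_p(X)\times C_p(X)$. Note that this argument needs no copy of the Cantor set in $X$ and no countable/uncountable dichotomy: the ``extra room'' comes from the descriptive complexity of $C_p(S)$, not from self-similarity of $2^\omega$.
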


\begin{proof}
Clearly, it is enough to consider the case of infinite space $X$. Let $(x_n)_{n=1}^\infty$ be sequence of distinct points of $X$ converging to a point $x_0$,
and let $S=\{x_n: n=0,1,\dots\}$. The space $C_p(S)$ is a Borel subset of $\mathbb{R}^S$ which is not $\sigma$-compact, cf. \cite[Theorems 6.3.6 and 6.3.10]{vM}.
Hence, from Hurewicz theorem (see, \cite[Theorem 21.18]{Ke}) it follows that $C_p(S)$ contains a closed copy $P$ of the space of irrationals.
Since the Banach space $C(X)$ is separable, there is a continuous map $h$ of $P$ onto $C(X)\times C(X)$ and its continuous extension
$H: C_p(S)\to C(X)\times C(X)$. Obviously, $H$ is also continuous with respect to the weaker pointwise topology in $C(X)\times C(X)$.
To finish the proof, it remains to observe that the restriction operator $f\mapsto f\upharpoonright S$, for $f\in C_p(X)$,
is a continuous surjection of $C_p(X)$ onto $C_p(S)$.
\end{proof}

On the other hand, as we shall prove, $C_p(X)\times C_p(X)$ is not always a \textit{linear} continuous image of $C_p(X)$, even for a
(compact) metrizable $X$. A Cook continuum $M$ or a rigid Bernstein set $B$ can serve as an example. Let us recall that a Cook continuum is a nontrivial metrizable continuum $M$
such that, for every subcontinuum $C\subseteq M$, every continuous mapping $f:C\rightarrow M$ is either the identity or is constant.

The following proposition strengthens slightly results of R.\ Pol \cite[Theorem 3.1]{P} and \cite[Theorem 4.1]{P}.

\begin{fact}
If $X=M$ or $X=B$, then there is no linear continuous surjection from $C_p(X)$ onto $C_p(X)\times C_p(X)$.
\end{fact}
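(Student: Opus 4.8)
The plan is to prove a nonexistence statement for linear continuous surjections, so I would argue by contradiction and exploit the rigidity of $M$ and $B$ to derive a structural impossibility. Suppose $T:C_p(X)\to C_p(X)\times C_p(X)\cong C_p(X\oplus X)$ is a linear continuous surjection. The first step is to record the standard duality: a linear continuous surjection between $C_p$-spaces induces, on the level of the underlying index sets, a dual map that assigns to each point a finite support in the source. More precisely, writing elements of $C_p(X\oplus X)$ via the two coordinate copies $X_1,X_2$ of $X$, for each point $z$ of $X\oplus X$ the evaluation $f\mapsto T(f)(z)$ is a continuous linear functional on $C_p(X)$, hence has the form $f\mapsto\sum_{x\in\supp(z)}\lambda_x(z)\,f(x)$ for a finite set $\supp(z)\subseteq X$ and nonzero reals $\lambda_x(z)$. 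Surjectivity of $T$ forces, by a standard Hewitt-type argument, that the supports cannot collapse: the union $\bigcup_z\supp(z)$ must meet $X$ in an uncountable set, and moreover the support-assignment is "rich enough" that one can locate many points of $X$ each appearing with nonzero coefficient.

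The second and central step is to convert this support structure into continuous functions and then invoke rigidity. Partitioning $X\oplus X$ according to the cardinality and location of supports, one extracts (on an uncountable piece) continuous selections $x\mapsto$ (an element of $\supp$), which by Lemma \ref{Bernstein2} / Corollary \ref{sztywnosc} for $X=B$, or by the defining property of a Cook continuum for $X=M$, must be either constant or the identity on a suitable uncountable $G_\delta$-subset (for $B$) or subcontinuum (for $M$). This is exactly the mechanism already used in the proof of Theorem \ref{Tw.1}. Because $T$ is \emph{linear}, the coefficient functions $\lambda_x(\cdot)$ are far more constrained than in the merely-homeomorphic case: each functional $f\mapsto T(f)(z)$ depends linearly on the finitely many values $f(x)$, $x\in\supp(z)$, so after the rigidity reduction the map $T$ is, on the relevant uncountable set, governed by finitely many "identity or constant" support maps with scalar multipliers.

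The third step is to produce the contradiction with surjectivity. After the rigidity reduction we may assume that on an uncountable set the value $T(f)(z)$ is a fixed finite linear combination of $f(z)$ (the identity contributions) and finitely many $f(a)$, $a\in J$ (the constant contributions). The point is that both coordinate copies $X_1$ and $X_2$ see essentially the \emph{same} diagonal contribution $f(x)$ (up to scalars), so the two target coordinates at the corresponding diagonal points become linearly dependent modulo the finite-dimensional fixed part coming from $J$. I would then exhibit an element $(g_1,g_2)\in C_p(X)\times C_p(X)$ whose two coordinates violate this forced dependence at suitably chosen diagonal points outside $J$ — this is where one constructs, as in the proof of Theorem \ref{Tw.1}, functions with prescribed values at finitely many points avoiding the finite exceptional set. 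Such a target cannot lie in the image of $T$, contradicting surjectivity.

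The main obstacle I anticipate is the bookkeeping of the finite "constant" part $J$ together with the scalar coefficients: unlike the homeomorphism argument of Theorem \ref{Tw.1}, here one must control not only \emph{which} points support each functional but also the linear coefficients, and ensure that the forced linear dependence between the two coordinates genuinely obstructs surjectivity rather than being absorbed by the finite-dimensional correction from $J$. Making the dependence precise — identifying the exact affine subspace of $\mathbb{R}^2$ that the pair of diagonal values is constrained to, and checking it is a proper subspace so that a prescribed pair falls outside it — is the delicate quantitative heart of the proof, and is precisely the linear-algebraic analogue of the connectedness/parallelogram argument (Claim 1, Claim 2, Case 1, Case 2) carried out for the homeomorphism case.
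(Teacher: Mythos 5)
Your first two steps coincide with the paper's own proof: represent each evaluation functional $f\mapsto T(f)(z)$ by a finite support with coefficients, use lower semicontinuity of the support map (the paper invokes \cite[Lemma 6.13.2]{vM}) to obtain finitely many continuous selections $s_1,\ldots,s_n$ on a nonempty open piece, and apply rigidity (the Cook property for $M$, Corollary \ref{sztywnosc} for $B$) to make each $s_i$ the identity or a constant on a common nontrivial continuum (resp.\ uncountable $G_\delta$-set), so that $\supp_{\varphi}(x_1)\subseteq\{x\}\cup J$ and $\supp_{\varphi}(x_2)\subseteq\{x\}\cup I$ there for fixed finite sets $I,J$. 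Up to this point your proposal is the paper's argument.

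The gap is in your third step, precisely at the place you yourself flag as the ``delicate quantitative heart.'' The pointwise dependence you propose to violate does not exist: at a single diagonal point $x\notin I\cup J$, the pair $\bigl(T(f)(x_1),T(f)(x_2)\bigr)$ equals $\bigl(\lambda_1(x)f(x),\lambda_2(x)f(x)\bigr)$ plus a linear image of the tuple $f\upharpoonright(I\cup J)$, and since those values can be prescribed independently of $f(x)$, this pair will in general range over all of $\mathbb{R}^2$; nothing in the setup forces the $I\cup J$-contribution to the two coordinates to be degenerate, so there is no proper (affine) subspace of $\mathbb{R}^2$ for a cleverly chosen $(g_1,g_2)$ to escape. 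The finite-dimensional correction really can absorb any single-point constraint. The paper's resolution is not a linear analogue of the Case 1/Case 2 connectedness argument of Theorem \ref{Tw.1} but a dimension count over \emph{many} diagonal points simultaneously: set $k=|I\cup J|$ and choose $k+1$ points $a^1,\ldots,a^{k+1}$ in the common continuum (resp.\ $G_\delta$-set) avoiding $I\cup J$. Then the $2k+2$ values $\varphi(f)(a^i_1),\varphi(f)(a^i_2)$ depend linearly on only the $2k+1$ values of $f$ on $\{a^1,\ldots,a^{k+1}\}\cup I\cup J$; since restriction of $C_p(X\oplus X)$ to those $2k+2$ points is onto $\mathbb{R}^{2k+2}$, surjectivity of $\varphi$ would produce a linear surjection of $\mathbb{R}^{2k+1}$ onto $\mathbb{R}^{2k+2}$, which is impossible because a linear map cannot raise dimension. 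So the missing idea is simply to take more diagonal points than $|I\cup J|$; once that is done the contradiction is immediate linear algebra, with no connectedness or prescribed-value construction needed.
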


\begin{proof}
We will give a proof for $X=M$ only. The case $X=B$ is almost the same (the role of nontrivial subcontinua in the argument below is played by uncountable $G_\delta$-subsets).
Striving for a contradiction, suppose that $\varphi:C_p(M)\to C_p(M\oplus M)$ is a linear continuous surjection.
Similarly as in Section 4, we view $M\oplus M$ as $M\times \{1,2\}$ and by $x_i, A_i$ we denote the copies of $x\in M$, $A\subseteq M$ in $M\times\{i\}$.
It is well-known (see \cite[Ch. 6.8]{vM}) that to each $y\in M\oplus M$ we can assign a nonempty finite set $\supp_{\varphi}(y)$ such that
\begin{align}
&\text{the assignment }y\mapsto \supp_{\varphi}(y) \text{ is lower-semicontinuous,}\\
&\varphi(f)(y)=\sum_{z\in \supp_{\varphi}(y)}\lambda(y,z)f(z), \quad \mbox{for some } \lambda(y,z)\in\mathbb{R}. \label{support}
\end{align}
Applying \cite[Lemma 6.13.2]{vM}, we can find a nonempty open subset $U_1\subseteq M_1$ and, for some $n\in \NN$, continuous mappings
$s_i:U_1\to M$, $i\leq n$,
such that $$\supp_{\varphi}(y)=\{s_1(y),\ldots,s_n(y)\}$$ for every $y\in U_1$.
By Janiszewski theorem (see \cite[\S 47.III.1]{Ku1})
, there is a nontrivial continuum $C_1\subseteq U_1$. By the rigidity of $M$ the restriction of each mapping $s_i$ to $C_1$ is
either the identity (up to identification of $C_1$ with $C$) or is constant. Hence, there is a finite set $J\subseteq M$ such that
\begin{align}\label{C1}
\supp_{\varphi}(y_1)=\{y\}\cup J,\quad \text{ for } y_1\in C_1.
\end{align}

Using the same argument as above for $C_2$ (the copy of $C_1$ in $M_2$) instead of $M_1$, we get a nontrivial continuum $K_2\subseteq C_2$ and a finite set $I\subseteq M$
such that
\begin{align}\label{K2}
\supp_{\varphi}(y_2)=\{y\}\cup I, \quad \text{ for } y_2\in K_2.
\end{align}

Put $k=|I\cup J|$ and 
let $A=\{a^1,\ldots,a^{k+1}\}\subseteq K\setminus (I\cup J)$ be a set of cardinality $k+1$.
By \eqref{C1}, \eqref{K2} we have
\begin{align}\label{A1A2}
\bigcup\{\supp_{\varphi}(y):y\in A_1\cup A_2\}\subseteq A\cup I \cup J.
\end{align}

Note that $|A_1\cup A_2|=2k+2$ and $|A\cup I \cup J|=2k+1$.
We have $A\cup I\cup J=\{x^1,\ldots, x^{2k+1}\}$, for some $x^i\in M$, $i\leq 2k+1$.

Now, we define a mapping $T:\mathbb{R}^{2k+1}\to \mathbb{R}^{2k+2}$ in the following way (cf. \cite[page 51]{P}, \cite[page 451]{vM}). Given $(r^1,\ldots , r^{2k+1})\in \mathbb{R}^{2k+1}$
choose a function $f\in C_p(M)$ such that $f(x^i)=r^i$.
Let $$T(r^1\ldots, r^{2k+1})=(\varphi(f)(a^1_1),\ldots, \varphi(f)(a^{k+1}_1),\varphi(f)(a^1_2),\ldots, \varphi(f)(a^{k+1}_2)).$$
Formula \eqref{support} and \eqref{A1A2} imply that $T$ does not depend on the choice of a function $f$ (cf. \cite[Lemma 6.8.1]{vM}) and hence
$T$ is well defined. Obviously, it is also linear. Since $\varphi$ is a surjection, one can easily verify that $T$ is onto.
However, this is a contradiction since a linear mapping cannot raise dimension.
\end{proof}

\subsection{Countable spaces}

It is well-known that, for any countable metrizable nondiscrete spaces $X$ and $Y$, the function spaces $C_p(X)$ and $C_p(Y)$ are homeomorphic,
see \cite{DMM}, \cite{vM}. Therefore, if $X$ is an infinite countable metrizable space, then $C_p(X)$ is clearly homeomorphic to $C_p(X) \times C_p(X)$
(since $C_p(X) \times C_p(X)$ can be identified with $C_p(X\oplus X)$). However, it is not clear what happens if we drop the metrizability assumption:

\begin{question}
 Let $X$ be an infinite countable space. Is it true that $C_p(X)$ is homeomorphic to $C_p(X)\times C_p(X)$?
\end{question}

The linear topological classification of $C_p(X)$--spaces for countable metrizable spaces $X$ is not fully understood; see Baars and de Groot \cite{BG}.
In particular, the following question seems to be open:

\begin{question}
	Suppose that $X$ is an infinite countable metrizable space. Is it true that $C_p(X)$ is linearly (uniformly) homeomorphic to $C_p(X)\times C_p(X)$?
\end{question}

Let us note that from results of Baars and de Groot (\cite{BG},  \cite[Theorem 3.22]{BGFund}) it follows that the above question has the affirmative answer if $X$ is either non-scattered or is scattered of height $\leq \omega$.

It is known that for an infinite Polish zero-dimensional space $X$ which is either compact or not $\sigma$-compact, the space $C_p(X)$ is linearly homeomorphic to
$C_p(X)\times C_p(X)$ (see \cite{A4} and \cite{BG}); therefore it is clear that a metrizable space $X$ such that $C_p(X)$ is not homeomorphic to $C_p(X)\times C_p(X)$
cannot be simultaneously compact and zero-dimensional. It is natural to ask what happens if a Polish
zero-dimensional space
$X$ is $\sigma$-compact:

\begin{question}
 Suppose that $X$ is a Polish zero-dimensional $\sigma$-compact space. Is it true that $C_p(X)$ is (linearly) homeomorphic to $C_p(X)\times C_p(X)$?
\end{question}

\end{document}